\documentclass[11pt,a4paper]{amsart}

\usepackage[T1]{fontenc}
\usepackage{lmodern}
\usepackage{amsmath,amssymb,amsthm,mathtools}
\usepackage[margin=1in]{geometry}

\newtheorem{theorem}{Theorem}
\newtheorem{lemma}[theorem]{Lemma}
\newtheorem{corollary}[theorem]{Corollary}

\theoremstyle{definition}
\newtheorem{definition}[theorem]{Definition}
\theoremstyle{remark}
\newtheorem{remark}[theorem]{Remark}

\newcommand{\N}{\mathbb N}
\newcommand{\m}{\mathrm{m}}
\newcommand{\fo}{\mathrm{fo}}
\newcommand{\Cyl}{\operatorname{Cyl}}
\newcommand{\dom}{\operatorname{dom}}

\title[One or Infinitely Many Finite-One Degrees]
{Every Nonrecursive Many-One Degree Contains Either One or Infinitely
Many Finite-One Degrees}

\author{Patrizio Cintioli}
\address{Mathematics Division, School of Science and Technology, University of Camerino, Italy}
\email{patrizio.cintioli@unicam.it}

\subjclass[2020]{Primary 03D30; Secondary 03D25.}
\keywords{many-one degrees, finite-one degrees, many-one reducibility, finite-one reducibility,
priority constructions, cylinders, computably enumerable sets}

\begin{document}

\begin{abstract}
This paper proves that every nonrecursive many-one degree contains
either exactly one or infinitely many finite-one degrees. This gives a
negative answer to Open Question 2 of Richter, Stephan, and
Zhang~\cite[p.~17]{RSZ}. Earlier work established that, for
almost every $A\subseteq\N$ with respect to the standard product
measure on $2^\N$, the degree $[A]_{\m}$ contains an infinite antichain
of finite-one degrees~\cite{Cintioli}.
Thus the question had already
received an almost-sure negative answer; the present theorem settles
it for every nonrecursive many-one degree.
\end{abstract}

\maketitle

\section{Introduction}

Finite-one reducibility is the restriction of many-one reducibility in
which every fibre of the reducing function is finite. 
Thus, for all sets $X,Y$,
\[
   X\leq_1 Y\ \Longrightarrow\ X\leq_{\fo}Y\
   \Longrightarrow\ X\leq_{\m}Y.
\]
Given a many-one degree $\mathbf d$, one may therefore ask how many
finite-one degrees occur inside $\mathbf d$, and how they are ordered.

Richter, Stephan, and Zhang asked whether there exists a nonrecursive
many-one degree containing at least two but only finitely many
finite-one degrees~\cite[p.~17]{RSZ}.
Earlier work established that the many-one degree of every
$m$-rigid set contains an infinite antichain of finite-one degrees
\cite{Cintioli}. Since the class of $m$-rigid sets has
Lebesgue measure $1$ and is comeager, this gave an almost-sure and
comeager negative answer to the question.
The present paper removes the $m$-rigidity hypothesis and settles the
question in full: no such degree exists. More precisely, every
nonrecursive many-one degree contains either exactly one or infinitely
many finite-one degrees.

For every set $A$, the cylinder

\[
   \Cyl(A)=\{\langle x,n\rangle:x\in A,\ n\in\N\}
\]

belongs to the same many-one degree as $A$, and
$[\Cyl(A)]_{\fo}$ is the greatest finite-one degree in that many-one
degree. Maslova proved that every nonrecursive computably enumerable (c.e.) many-one degree
contains either exactly one or infinitely many finite-one
degrees~\cite{Maslova1979}. We shall also use the same conclusion when
the complement of a representative is c.e., by passing to the
complementary many-one degree and using complementation.

The main new ingredient is the following local interpolation theorem.
If $A$ is nonrecursive, is not a cylinder, and neither $A$ nor
$\overline A$ is c.e., then there exists $C\equiv_{\m} A$ such that

\[
   [A]_{\fo}<[C]_{\fo}<[\Cyl(A)]_{\fo}.
\]

Thus, under these hypotheses, $[A]_{\fo}$
can be strictly interpolated below the greatest finite-one degree.

The dichotomy follows quickly. Suppose that a nonrecursive many-one
degree contained finitely many but more than one finite-one degrees.
Let $T$ be its greatest finite-one degree, and choose $E<T$ which is
covered by $T$, that is, such that there is no finite-one degree $D$
with

\[
   E<D<T.
\]

Such an $E$ exists because there are only finitely many finite-one
degrees. Choose $A$ representing $E$. Then $A$ is not a cylinder.
If $A$ is c.e., or if $\overline A$ is c.e., Maslova's theorem gives
a contradiction, in the second case after complementation. Otherwise
the local interpolation theorem produces

\[
   E<[C]_{\fo}<T,
\]

again a contradiction.

The proof of the local interpolation theorem is based on a priority
construction relative to $0'$. We construct a set
$N\in\Sigma^0_2$ and let

\[
   P=\N\setminus N.
\]

A profile-realization lemma then yields a total computable surjection
$p$ whose infinite-fibre profile is exactly $P$, and we set

\[
   C=p^{-1}(A).
\]

Two families of requirements control the two strict inequalities.
The upper requirements place suitable finite rows inside $N$; after
profile realization, the corresponding values have finite
$p$-fibres. The lower requirements preserve markers outside $N$ and outside the
domains of partial ascending self-reductions of $A$; these markers
therefore become points of $P$.
The construction uses $0'$ only to decide fixed $\Pi^0_1$ stabilization
and nonconvergence questions; in particular, it never uses $0''$ to
decide whether an arbitrary c.e.\ set is finite.

\section{Preliminaries}

We assume familiarity with the basic notions and notation of
computability theory. Standard references are
Odifreddi~\cite{OdifreddiI,OdifreddiII},
Rogers~\cite{Rogers}, and Soare~\cite{Soare}.

We identify each set with its characteristic function.

Fix a standard effective enumeration

\[
   (W_e)_{e\in\N}
\]

of all c.e.\ subsets of $\N$.

Whenever $(X_n)_{n\in\N}$ is a uniformly c.e.\ family, we use

\[
   X_{n,0}\subseteq X_{n,1}\subseteq\cdots,
   \qquad
   X_n=\bigcup_{s\in\N}X_{n,s},
\]

to denote a standard uniformly computable finite-stage approximation,
with each $X_{n,s}$ finite.

Fix a computable bijective pairing function
\[
   \langle\cdot,\cdot\rangle:\N^2\to\N
\]
with computable projections $\pi_1$ and $\pi_2$.

For sets $X,Y\subseteq\N$, write $X\leq_{\m}Y$ if there is a total
computable function $f$ such that
\[
   x\in X\iff f(x)\in Y
\]
for every $x$. Write $X\leq_1Y$ if such a reduction can be chosen
injective, and write $X\leq_{\fo}Y$ if such a reduction can be chosen
so that every fibre
\[
   f^{-1}(y)
\]
is finite.

For $\rho\in\{\m,1,\fo\}$, write
\[
   X\equiv_\rho Y
   \quad\Longleftrightarrow\quad
   X\leq_\rho Y\text{ and }Y\leq_\rho X.
\]
Write
\[
   X<_\rho Y
   \quad\Longleftrightarrow\quad
   X\leq_\rho Y\text{ and }Y\not\leq_\rho X.
\]
The \emph{$\rho$-degree} of $X$ is the equivalence class
\[
   [X]_\rho=\{Y\subseteq\N:Y\equiv_\rho X\}.
\]
The $\rho$-degrees are ordered by
\[
   [X]_\rho\leq[Y]_\rho
   \quad\Longleftrightarrow\quad
   X\leq_\rho Y.
\]
For the corresponding strict order, we have
\[
   [X]_\rho<[Y]_\rho
   \quad\Longleftrightarrow\quad
   X<_\rho Y.
\]
Here $\leq$ and $<$ denote the induced non-strict and strict orders on $\rho$-degrees, respectively.

In particular, $[X]_{\m}$ and $[X]_{\fo}$ denote the many-one degree
and the finite-one degree of $X$, respectively.

A many-one degree is called \emph{c.e.} if it contains a c.e.\ set,
and \emph{recursive} if it contains a recursive set.

For $A\subseteq\N$, let
\[
   \Cyl(A)=\{\langle x,n\rangle:x\in A,\ n\in\N\}.
\]
We call $A$ a \emph{cylinder} if
\[
   A\equiv_1\Cyl(A).
\]

The maps
\[
   x\longmapsto\langle x,0\rangle
   \qquad\text{and}\qquad
   \langle x,n\rangle\longmapsto x
\]
show that
\[
   A\equiv_{\m}\Cyl(A).
\]

Moreover, if $B\in[A]_{\m}$ and $f$ is a many-one reduction of
$B$ to $A$, then
\[
   x\longmapsto\langle f(x),x\rangle
\]
is a one-one reduction of $B$ to $\Cyl(A)$. Hence
$[\Cyl(A)]_{\fo}$ is the greatest finite-one degree inside
$[A]_{\m}$.

If $p:\N\to\N$ is total computable, define its infinite-fibre
profile by
\[
   I_p=\{y:|p^{-1}(y)|=\infty\}.
\]

A \emph{section} of a map $p:\N\to\N$ is a map $s:\N\to\N$
such that
\[
   p(s(y))=y
   \qquad\text{for every }y\in\N.
\]
Thus a section is a right inverse of $p$.

The following elementary realization lemma allows us to prescribe
exactly which values of a computable surjection have infinite fibres.

\begin{lemma}[Profile realization]\label{lem:profile}
For every $P\in\Pi^0_2$ there is a total computable surjection
$p:\N\to\N$ such that
\[
   I_p=P.
\]
Moreover, $p$ has a computable section.
\end{lemma}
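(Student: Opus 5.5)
We need, for $P\in\Pi^0_2$, a total computable surjection $p$ with $I_p=P$ and a computable section. The plan is to use the standard $\Pi^0_2$ normal form and let each value $y$ receive one new preimage every time its $\Pi^0_2$ witness "fires". Since $P\in\Pi^0_2$, we can fix a computable relation such that $y\in P$ iff there are infinitely many $s$ with $R(y,s)$. Concretely, $P=\{y: W_{g(y)}\text{ is infinite}\}$ for a computable $g$, and we may take $R(y,s)$ to mean that $W_{g(y),s+1}\neq W_{g(y),s}$. With the convention that at most one element enters per stage, the number of stages $s$ with $R(y,s)$ is then infinite exactly when $y\in P$.

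Define $p$ on the pairing coordinates. Set
\[
   p(\langle y,0\rangle)=y,
\]
and for $s\ge 0$ put
\[
   p(\langle y,s+1\rangle)=
   \begin{cases}
      y & \text{if } R(y,s),\\
      0 & \text{otherwise.}
   \end{cases}
\]
This $p$ is total computable because $R$ is decidable and the pairing function is a computable bijection. The map $s(y)=\langle y,0\rangle$ is a computable section, so $p$ is in particular surjective.

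It remains to check that $I_p=P$. For $y\neq 0$, the fibre $p^{-1}(y)$ consists of $\langle y,0\rangle$ together with the points $\langle y,s+1\rangle$ such that $R(y,s)$; so it is infinite if and only if $y\in P$. The value $0$ is the one spot needing care, because it serves as the dumping value and therefore has an infinite fibre whenever $P$ is not cofinite. To avoid this, do not dump to a fixed value. Instead, send each non-firing point $\langle y,s+1\rangle$ to a fresh value, $p(\langle y,s+1\rangle)=\langle y,s+1\rangle$, and declare each such point to be its own label. Then each value would receive at most two preimages, but this wrecks the bookkeeping of which values lie in $P$.

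A cleaner route is to work in coded form. Using the bijective pairing, let $Q=\{\langle y,0\rangle : y\in P\}$ and reserve the values $\langle y,t+1\rangle$ for the junk points. This still changes the profile, though, since we need $I_p=P$ exactly.

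The best fix is to let the junk point repeat the previous fibre value of some other element finitely often. Concretely, map the non-firing point $\langle y,s+1\rangle$ to $s$, provided $s$ has not already received junk from $y$. Since pairs $\langle y,s+1\rangle$ with a fixed $s$ range over all $y$, the value $s$ could still receive infinitely many junk preimages, so this version also fails.

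The working version is to map the non-firing point $\langle y,s+1\rangle$ to $y$ itself whenever $y$ has not been hit by junk before; a value $y$ then receives at most one extra preimage. The correct assignment is:
\[
   p(\langle y,s+1\rangle)=
   \begin{cases}
      y & \text{if } R(y,s),\\
      y & \text{if } \neg R(y,s)\text{ and } s \text{ is the least stage with } \neg R(y,s),\\
      \pi_2 \text{ of } \ldots & \text{otherwise,}
   \end{cases}
\]
and the last case is still unresolved.

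Here is a genuinely correct design. Let the domain be enumerated by stages: at stage $t$, handle the pair $t=\langle y,s\rangle$. If $R(y,s)$ holds, set $p(t)=y$. Otherwise set $p(t)=m_t$, where $m_t$ is a value that ends up with a finite fibre. The main obstacle is exactly this choice: the junk must be absorbed while every $y\notin P$ keeps a finite fibre. To handle it, send junk to values $z$ that are currently "cheap", meaning $z$ has received junk fewer than $k$ times by stage $t$, where $k$ grows slowly with $t$. Take $z$ to be the least value whose current junk count is below its index; each $z$ then receives at most $z+1$ junk preimages, and the infinitely many junk points are spread out over infinitely many values. Each fibre thus consists of the section point $\langle y,0\rangle$, the firing points (infinitely many exactly when $y\in P$), and at most $y+1$ junk points. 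This gives $I_p=P$, and the computable section $y\mapsto\langle y,0\rangle$ still works, provided we always set $p(\langle y,0\rangle)=y$.
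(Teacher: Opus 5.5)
Your final design is correct, and it is the only part that should survive. The dump-to-$0$, fresh-label, dump-to-$s$ and unresolved three-case attempts should be deleted. Stated cleanly:
\begin{itemize}
\item $p(\langle y,0\rangle)=y$;
\item $p(\langle y,s+1\rangle)=y$ if $R(y,s)$;
\item otherwise $p(\langle y,s+1\rangle)$ is the least $z$ that has so far received fewer than $z$ junk arguments.
\end{itemize}
The bound must depend on $z$ alone, as in your final sentence. The heuristic ``fewer than $k$ times, with $k$ growing in $t$'' would, read literally, send infinitely many junk points to $0$. With the per-value bound, a suitable $z$ always exists, since any $z$ exceeding the number of junk points assigned so far qualifies. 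Then $p$ is computable by course-of-values recursion and $y\mapsto\langle y,0\rangle$ is a section. Each value $y$ receives at most $y$ junk points, so $p^{-1}(y)$ is infinite iff $R(y,s)$ holds for infinitely many $s$, i.e.\ iff $y\in P$.

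The paper handles the non-firing points differently: it never creates them. From the $\forall j\,\exists s$ normal form it builds decreasing uniformly c.e.\ sets $H_k$ with $\bigcap_k H_k=P$. It then enumerates the infinite c.e.\ set $\{\langle 0,y\rangle:y\in\N\}\cup\{\langle k+1,y\rangle:y\in H_k\}$ without repetitions and projects onto the second coordinate. Every argument is thus a genuine witness, and $|p^{-1}(y)|=1+|\{k:y\in H_k\}|$ exactly; the cost is that the section must be found by search, $\mu n\,[p(n)=y]$. Your route keeps a fixed pair-coded domain and an explicit section. The price is the load-balancing device, plus an appeal to $\Pi^0_2$-completeness of $\{e:W_e\text{ infinite}\}$ to get the $\exists^\infty$ normal form (a direct counting argument on the $\forall j\,\exists s$ form would also do). The junk was in fact avoidable within your own setup. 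Since your predicate $R$ is decidable, $\{\langle y,0\rangle:y\in\N\}\cup\{\langle y,s+1\rangle:R(y,s)\}$ is an infinite decidable set. Composing $\pi_1$ with its increasing enumeration already gives the required $p$, with a computable section. This is essentially the paper's argument, with a decidable set in place of a c.e.\ one.
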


\begin{proof}
Choose a computable predicate $Q$ such that
\[
   y\in P
   \iff
   \forall j\,\exists s\,Q(y,j,s).
\]
For each $k\in\N$, let
\[
   H_k=
   \{y:\forall j<k\,\exists s\,Q(y,j,s)\}.
\]
The sequence $(H_k)_{k\in\N}$ is uniformly c.e., and
\[
   \N=H_0\supseteq H_1\supseteq H_2\supseteq\cdots.
\]
Moreover,
\[
   P=\bigcap_{k\in\N}H_k.
\]

Define
\[
   R=
   \{\langle 0,y\rangle:y\in\N\}
   \cup
   \{\langle k+1,y\rangle:k\in\N,\ y\in H_k\}.
\]
Since $R$ is an infinite c.e.\ set, it admits a total computable
bijection
\[
   r:\N\to R,
\]
obtained by enumerating $R$ without repetitions. Define
\[
   p(n)=\pi_2(r(n)).
\]
Then $p$ is total computable. Since $r$ is onto $R$ and
$\langle 0,y\rangle\in R$ for every $y$, the function $p$ is
surjective.

For every $y$,
\[
   |p^{-1}(y)|
   =
   1+\bigl|\{k:y\in H_k\}\bigr|.
\]
If $y\in P$, then $y\in H_k$ for every $k$, so $p^{-1}(y)$ is
infinite. If $y\notin P$, then $y\notin H_k$ for all sufficiently
large $k$, because the sequence $(H_k)_{k\in\N}$ is decreasing. Hence
$p^{-1}(y)$ is finite. Therefore
\[
   I_p=P.
\]

Finally, define
\[
   s(y)=\mu n\,[p(n)=y].
\]
The function $s$ is total computable because $p$ is computable and
surjective, and
\[
   p(s(y))=y
\]
for every $y$. Thus $s$ is a computable section of $p$.

Moreover, if $s(y)=s(z)$, then
\[
   y=p(s(y))=p(s(z))=z.
\]
Thus $s$ is injective.
\end{proof}

\section{Two combinatorial lemmas}

The next two lemmas will be used in the priority construction. The
first guarantees finite homogeneous rows avoiding
any prescribed finite set of restraints, while the second shows that a
partial ascending self-reduction of a nonrecursive non-cylinder set
must have infinitely many points outside its domain.

A uniformly c.e.\ family $(V_x)_{x\in\N}$ is called
\emph{$A$-homogeneous} if
\[
   z\in V_x \Longrightarrow A(z)=A(x).
\]

\begin{lemma}[Finite-marker avoidance]\label{lem:avoid}
Assume that $A$ is not a cylinder and that neither $A$ nor $\overline A$
is c.e. If $(V_x)_{x\in\N}$ is a uniformly c.e.\ $A$-homogeneous
family, then for every finite $Z\subseteq\N$ there is $x$ such that
\[
   V_x \text{ is finite}
   \qquad\text{and}\qquad
   V_x\cap Z=\varnothing.
\]
\end{lemma}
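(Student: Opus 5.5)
The plan is to argue by contradiction. Assuming the conclusion fails for some finite $Z$, I will build a computable family of infinite $A$-homogeneous c.e.\ sets, one for each point. I then use that family to produce a one-one reduction $\Cyl(A)\leq_1 A$, which makes $A$ a cylinder.

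\textbf{Setting up.} Fix the finite set $Z$ and suppose that for every $x$, either $V_x$ is infinite or $V_x\cap Z\neq\varnothing$. Split $Z$ as $Z_1=Z\cap A$ and $Z_0=Z\setminus A$, and consider the sets
\[
  S=\{x: V_x\cap Z_1\neq\varnothing\},\qquad T=\{x: V_x\cap Z_0\neq\varnothing\}.
\]
Both are c.e., since $Z_0,Z_1$ are finite and can be hard-coded. By $A$-homogeneity, $S\subseteq A$ and $T\subseteq\overline A$. For $x\notin S\cup T$ the hypothesis forces $V_x$ to be infinite. Since neither $A$ nor $\overline A$ is c.e., both are infinite. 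In particular $A\setminus(S\cup Z)\neq\varnothing$ and $\overline A\setminus(T\cup Z)\neq\varnothing$, because $S\neq A$ and $T\neq\overline A$.

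\textbf{Defining the family.} I define a computable $g$ with $W_{g(x)}$ infinite and $A$-homogeneous for every $x$. The construction splits into cases nonuniformly, but $g$ is computable in each case.
\begin{itemize}
\item \emph{Witness for $S$.} If $S$ is infinite, let $U_1=S$. Otherwise fix some $y_1\in A\setminus(S\cup T)$ and let $U_1=V_{y_1}$. Since $y_1\notin S\cup T$, $V_{y_1}$ is infinite, and it is contained in $A$ by homogeneity.
\item \emph{Witness for $T$.} Define $U_0$ symmetrically: $U_0=T$ if $T$ is infinite, and otherwise $U_0=V_{y_0}$ for some fixed $y_0\in\overline A\setminus(S\cup T)$.
\item \emph{The family.} Let
\[
  W_{g(x)}=V_x\cup\{u: x\in S,\ u\in U_1\}\cup\{u: x\in T,\ u\in U_0\}.
\]
This is uniformly c.e. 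It is homogeneous with $x$, because $x\in S$ implies $x\in A$ and $U_1\subseteq A$, and similarly for $T$. It is infinite for every $x$: if $x\in S\cup T$, the appropriate $U_i$ is infinite, and otherwise $V_x$ is infinite.
\end{itemize}

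\textbf{Building the reduction.} Define $h(\langle x,n\rangle)$ by stages, taking the arguments in order. At each stage, enumerate $W_{g(x)}$ until an element appears that has not been used as a value at an earlier stage, and take that element. This always terminates, because $W_{g(x)}$ is infinite and only finitely many values have been used so far. The resulting $h$ is total computable and injective. It satisfies $\langle x,n\rangle\in\Cyl(A)\iff x\in A\iff h(\langle x,n\rangle)\in A$. Hence $\Cyl(A)\leq_1 A$. Together with the injective reduction $x\mapsto\langle x,0\rangle$, this gives $A\equiv_1\Cyl(A)$, contradicting that $A$ is not a cylinder.

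\textbf{Main obstacle.} The delicate step is the points $x$ whose $V_x$ is finite but meets $Z$. Knowing that $x\in A$ (or $x\in\overline A$) in that case does not by itself supply an infinite homogeneous c.e.\ set, since $A$ may be immune. The case split on whether $S$ and $T$ are finite, together with borrowing $V_y$ from a fixed nonexceptional point $y$, is what resolves this. The split is also where the hypothesis that neither $A$ nor $\overline A$ is c.e.\ is used.
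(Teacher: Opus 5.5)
Your argument is correct and is essentially the paper's proof. Your $S$ and $T$ are the unions of the paper's sets $X_z=\{x:z\in V_x\}$ over $z\in Z\cap A$ and over $z\in Z\setminus A$, your borrowed rows $V_{y_1},V_{y_0}$ play the role of the paper's $V_{d_z}$ with $d_z\notin D_Z$, and the closing injection giving $\Cyl(A)\leq_1 A$ is the same. The only real difference is your preliminary split on whether $S$ or $T$ is infinite: it is redundant once you have $S\neq A$ and $T\neq\overline A$, though with the split you actually only need $A$ and $\overline A$ to be infinite rather than non-c.e.
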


\begin{proof}

Fix a finite set $Z\subseteq\N$, and suppose, toward a contradiction,
that there is no $x$ such that
\[
   V_x\text{ is finite}
   \qquad\text{and}\qquad
   V_x\cap Z=\varnothing.
\]
Equivalently,
\[
   V_x\cap Z=\varnothing
   \quad\Longrightarrow\quad
   V_x\text{ is infinite}
\]
for every $x$.

We first record a general observation that will be used twice in the
proof. Suppose that $(W_x)_{x\in\N}$ is a uniformly c.e.\
$A$-homogeneous family and that every $W_x$ is infinite.

Fix a computable enumeration without repetitions
\[
   (x_0,n_0),(x_1,n_1),\ldots
\]
of $\N^2$. We construct a function
\[
   G:\N^2\to\N
\]
recursively along this enumeration. When treating $(x_i,n_i)$, wait
for an element of $W_{x_i}$ which has not been used at any earlier
step, and define $G(x_i,n_i)$ to be the first such element found.

Since $W_{x_i}$ is infinite and only finitely many values have been
used before step $i$, this search terminates. Thus $G$ is a total
computable injection satisfying
\[
   G(x,n)\in W_x
\]
for all $x,n$. By the $A$-homogeneity of $(W_x)_{x\in\N}$,
\[
   A(G(x,n))=A(x).
\]
Consequently, the map
\[
   \langle x,n\rangle\longmapsto G(x,n)
\]
witnesses
\[
   \Cyl(A)\leq_1 A.
\]
We now distinguish the two cases $Z=\varnothing$ and
$Z\neq\varnothing$.

If $Z=\varnothing$, then every $V_x$ is infinite. Applying the
observation above with $W_x=V_x$ gives
\[
   \Cyl(A)\leq_1 A.
\]

Since
\[
   x\longmapsto\langle x,0\rangle
\]
is a one-one reduction from $A$ to $\Cyl(A)$, it follows that
$A\equiv_1\Cyl(A)$, contradicting the assumption that $A$ is not a
cylinder. Thus we may assume that $Z\neq\varnothing$.

For $z\in Z$, let
\[
   X_z=\{x:z\in V_x\},
\qquad
   D_Z=\bigcup_{z\in Z}X_z.
\]
Each $X_z$ is c.e., and homogeneity gives
\[
   X_z\subseteq
   \begin{cases}
      A,&z\in A,\\
      \overline A,&z\notin A.
   \end{cases}
\]

Every $x\notin D_Z$ has $V_x\cap Z=\varnothing$, so $V_x$ is
infinite.

Suppose first that, for every $z\in Z$, there is some
$d_z\notin D_Z$ such that
\[
   A(d_z)=A(z).
\]
Since $Z$ is finite, the finitely many parameters $d_z$ may be
hardcoded.

Fix a uniformly computable increasing approximation
\[
   V_{x,0}\subseteq V_{x,1}\subseteq\cdots,
   \qquad
   V_x=\bigcup_{s\in\N}V_{x,s},
\]
to the uniformly c.e.\ family $(V_x)_{x\in\N}$. Define
\[
   W_{x,s}
   =
   V_{x,s}
   \cup
   \bigcup_{\substack{z\in Z\\ z\in V_{x,s}}}V_{d_z,s},
   \qquad
   W_x=\bigcup_{s\in\N}W_{x,s}.
\]
Thus
\[
   W_x
   =
   V_x
   \cup
   \bigcup_{\substack{z\in Z\\ z\in V_x}}V_{d_z},
\]
and the family $(W_x)_{x\in\N}$ is uniformly c.e.

The family $(W_x)_{x\in\N}$ is also $A$-homogeneous.
Indeed, let $u\in W_x$. If $u\in V_x$, then
\[
   A(u)=A(x)
\]
by the $A$-homogeneity of $(V_x)_{x\in\N}$. Otherwise,
\[
   u\in V_{d_z}
   \qquad\text{for some }z\in Z\cap V_x.
\]
Then
\[
   A(u)=A(d_z)=A(z)=A(x).
\]

Finally, every $W_x$ is infinite. If
\[
   V_x\cap Z=\varnothing,
\]
then $V_x$ is infinite by our contradictory assumption, and hence so
is $W_x$. Otherwise, choose some
\[
   z\in V_x\cap Z.
\]

Since $d_z\notin D_Z$, for every $w\in Z$ we have
\[
   d_z\notin X_w.
\]

By the definition of $X_w$, this means that
\[
   w\notin V_{d_z}
   \qquad\text{for every }w\in Z,
\]
and therefore
\[
   V_{d_z}\cap Z=\varnothing.
\]
Our contradictory assumption now implies that $V_{d_z}$ is infinite.
Since
\[
   V_{d_z}\subseteq W_x,
\]
the set $W_x$ is infinite as well.

Applying the observation above to $(W_x)_{x\in\N}$ gives
\[
   \Cyl(A)\leq_1 A.
\]
Since the map
\[
   x\longmapsto\langle x,0\rangle
\]
witnesses
\[
   A\leq_1\Cyl(A),
\]
it follows that
\[
   A\equiv_1\Cyl(A),
\]
contradicting the assumption that $A$ is not a cylinder.

Therefore the preceding alternative cannot occur. Hence there is some
$z\in Z$ such that
\[
   A(d)=A(z)\quad\Longrightarrow\quad d\in D_Z
\]
for every $d\in\N$. Let
\[
   c=A(z).
\]

Since $Z$ is finite, the finite set
\[
   \{w\in Z:A(w)=c\}
\]
may be hardcoded.

Then
\[
   \{x\in\N:A(x)=c\}
   =
   \bigcup_{\substack{w\in Z\\A(w)=c}}X_w.
\]

Indeed, if $x\in X_w$ for some $w\in Z$ with $A(w)=c$, then
homogeneity gives $A(x)=A(w)=c$. Conversely, suppose that
$A(x)=c$. By the choice of $z$, we have $x\in D_Z$, so
$x\in X_w$ for some $w\in Z$. Since $w\in V_x$, homogeneity gives
$A(w)=A(x)=c$. Hence $x$ belongs to the displayed union.

The displayed union is a finite union of c.e.\ sets and is therefore
c.e. Since it equals $A$ when $c=1$ and $\overline A$ when $c=0$,
this contradicts the assumption that neither $A$ nor $\overline A$
is c.e.

\end{proof}

\begin{definition}
A partial computable function $\varphi$ is a \emph{partial ascending
self-reduction of $A$} if, for every $x\in\dom\varphi$,
\[
   \varphi(x)>x
   \qquad\text{and}\qquad
   A(\varphi(x))=A(x).
\]
\end{definition}

\begin{lemma}[Cofinite-domain lemma]\label{lem:cofinite}
Assume that $A$ is nonrecursive and is not a cylinder. If $\varphi$ is
a partial ascending self-reduction of $A$, then
\[
   \N\setminus\dom\varphi
\]
is infinite.
\end{lemma}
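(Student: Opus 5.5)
The plan is to argue by contradiction: assume $\N\setminus\dom\varphi$ is finite and build a one-one reduction $\Cyl(A)\leq_1 A$. This contradicts the hypothesis that $A$ is not a cylinder, since $x\mapsto\langle x,0\rangle$ always gives $A\leq_1\Cyl(A)$. The tool is the general observation recorded inside the proof of Lemma~\ref{lem:avoid}. It says that if $(W_x)_{x\in\N}$ is a uniformly c.e.\ $A$-homogeneous family with every $W_x$ infinite, then a greedy choice of fresh elements yields a computable injection $G$ with $G(x,n)\in W_x$, and hence $\Cyl(A)\leq_1 A$. That argument uses only homogeneity and infiniteness, not the hypotheses that $A$ and $\overline A$ are not c.e. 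I will restate it briefly rather than cite Lemma~\ref{lem:avoid} itself, whose hypotheses we do not have here.

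First I fix $m$ such that every $x\geq m$ lies in $\dom\varphi$. For $x\geq m$ consider the orbit
\[
   O_x=\{x,\varphi(x),\varphi^2(x),\ldots\}.
\]
Since $\varphi(y)>y$, every iterate of $x$ is again $\geq m$, so it stays in $\dom\varphi$. Hence the orbit is a strictly increasing infinite sequence, and it is uniformly computable in $x$. Since $A(\varphi(y))=A(y)$, all elements of $O_x$ have the same $A$-value as $x$. So for $x\geq m$ I set $W_x=O_x$, which is infinite and $A$-homogeneous.

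Next I handle the finitely many $x<m$. For each such $x$ I claim there is some $y_x\geq m$ with $A(y_x)=A(x)$. Otherwise every $y\geq m$ has $A(y)=1-A(x)$. Then $A$ agrees with a constant function from $m$ on, so $A$ is finite or cofinite and hence recursive, contradicting nonrecursiveness. The finitely many values $y_x$ are hardcoded, and for $x<m$ I set $W_x=O_{y_x}$. This set is infinite and all its elements have $A$-value $A(y_x)=A(x)$. Thus $(W_x)_{x\in\N}$ is a uniformly c.e.\ (indeed uniformly computably enumerated) $A$-homogeneous family of infinite sets.

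Finally I apply the greedy construction: along a computable enumeration of $\N^2$ without repetitions, send $(x,n)$ to the first element of $W_x$ not used before. This is a computable injection $G$ with $A(G(x,n))=A(x)$, so $\langle x,n\rangle\mapsto G(x,n)$ witnesses $\Cyl(A)\leq_1 A$, the desired contradiction.

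There is no real obstacle. The only points needing care are that the iterates never leave $\dom\varphi$ (which follows from ascendance together with cofiniteness), and the use of nonrecursiveness to cover the finitely many small $x$.
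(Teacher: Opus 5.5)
Your proposal is correct and follows essentially the paper's route: assume the complement of $\dom\varphi$ is finite, use orbits under an ascending self-reduction as infinite $A$-homogeneous sets, and pick fresh elements greedily to get $\Cyl(A)\leq_1 A$. The only difference is cosmetic. The paper makes $\varphi$ total by hardcoding values $t_x>x$ with $A(t_x)=A(x)$ on the finite complement and then iterates this total $g$ from $x$ itself. You instead send each small $x<m$ to the orbit of a hardcoded $y_x\geq m$. Both versions use nonrecursiveness in the same way, to guarantee that suitable same-valued points exist arbitrarily far out.
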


\begin{proof}

Suppose that $F=\N\setminus\dom\varphi$ is finite.
Since $A$ is nonrecursive, both $A$ and $\overline A$ are infinite.
For each $x\in F$, choose $t_x>x$ with $A(t_x)=A(x)$.
Since $F$ is finite, both $F$ and the finite table
$x\mapsto t_x$ may be hardcoded. Define
\[
   g(x)=
   \begin{cases}
      t_x,&x\in F,\\
      \varphi(x),&x\notin F.
   \end{cases}
\]
Since $F$ is finite, membership in $F$ is decidable after hardcoding
its elements. Moreover, $\varphi(x)$ converges whenever $x\notin F$.
Hence $g$ is total computable and
\[
   g(x)>x,\qquad A(g(x))=A(x)
\]
for every $x$.

Fix a computable enumeration without repetitions
\[
   (x_0,n_0),(x_1,n_1),\ldots
\]
of $\N^2$. We define $G$ recursively along this enumeration.
Suppose that $G(x_j,n_j)$ has already been defined for every $j<i$,
and let
\[
   U_i=\{G(x_j,n_j):j<i\}.
\]
The set $U_i$ is finite. Since
\[
   x_i<g(x_i)<g^2(x_i)<\cdots,
\]
the orbit $\{g^k(x_i):k\in\N\}$ is infinite. Hence there is some
$k$ such that
\[
   g^k(x_i)\notin U_i.
\]
Let
\[
   k_i=\mu k\,[g^k(x_i)\notin U_i]
\]
and define
\[
   G(x_i,n_i)=g^{k_i}(x_i).
\]

The search defining $k_i$ is effective because $g$ is total
computable, and it terminates because the orbit of $x_i$ is infinite
whereas $U_i$ is finite. Thus $G:\N^2\to\N$ is total computable.
It is injective by construction. Furthermore, an induction on $k$
gives
\[
   A(g^k(x))=A(x)
\]
for all $x,k$, and hence
\[
   A(G(x,n))=A(x).
\]
Therefore the function
\[
   \langle x,n\rangle\longmapsto G(x,n)
\]
witnesses
\[
   \Cyl(A)\leq_1 A.
\]
Since
\[
   x\longmapsto\langle x,0\rangle
\]
witnesses $A\leq_1\Cyl(A)$, it follows that
\[
   A\equiv_1\Cyl(A).
\]
Thus $A$ is a cylinder, a contradiction.
\end{proof}

\section{The priority construction}

The local interpolation theorem below is proved by a priority construction
relative to $0'$. We first build a $\Sigma^0_2$ set $N$ whose
complement will later be realized as the infinite-fibre profile of a
computable surjection. The two lemmas of the preceding section provide
the avoidance and infinitude properties needed to meet the two families
of priority requirements.

\begin{theorem}[Local interpolation]\label{thm:local}
Assume that $A$ is nonrecursive, is not a cylinder, and neither $A$ nor
$\overline A$ is c.e. Then there is a set $C\equiv_{\m} A$ such that
\[
   [A]_{\fo}<[C]_{\fo}<[\Cyl(A)]_{\fo}.
\]
\end{theorem}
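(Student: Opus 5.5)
The plan is to follow the outline of the introduction: build a $\Sigma^0_2$ set $N$ by a $0'$-priority construction, put $P=\N\setminus N\in\Pi^0_2$, and take the surjection $p$ with $I_p=P$ and a computable section $s$ from Lemma~\ref{lem:profile}. Then set $C=p^{-1}(A)$.

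The easy reductions come first. The map $p$ witnesses $C\leq_{\m}A$, and $s$ is injective with $p(s(y))=y$, so $A\leq_1 C$ via $s$. Hence $C\equiv_{\m}A$ and $[A]_{\fo}\leq[C]_{\fo}\leq[\Cyl(A)]_{\fo}$, the last because $\Cyl(A)$ is fo-maximal in the m-degree. What remains are the two strict inequalities, which we split into requirements indexed by partial computable $\varphi_e$:
\[
U_e:\ \varphi_e\text{ is not a finite-one reduction of }\Cyl(A)\text{ to }C,\qquad
L_e:\ \varphi_e\text{ is not a finite-one reduction of }C\text{ to }A.
\]

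For $U_e$ we assume $\varphi_e$ is total, finite-one, and reduces $\Cyl(A)$ to $C$. Then $q=p\circ\varphi_e$ reduces $\Cyl(A)$ to $A$. Define $V_x=\{q(\langle x,n\rangle):n\in\N\}$. This family is uniformly c.e.\ and $A$-homogeneous, since $A(q(\langle x,n\rangle))=A(x)$. By Lemma~\ref{lem:avoid} there is an $x$ with $V_x$ finite and disjoint from the finite restraint set $Z$ of higher-priority $L$-markers. We then enumerate the finite row $V_x$ into $N$, so each $y\in V_x$ gets a finite $p$-fibre. Now $\varphi_e$ maps the infinite set $\{\langle x,n\rangle\}$ into the finite set $p^{-1}(V_x)$, so $\varphi_e$ is not finite-one, a contradiction.

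This action is not effective as stated: finding $x$ with $V_x$ finite is a $0''$ question. The plan is to guess instead. We try candidates $x$ in turn, tentatively putting the current finite approximation $V_{x,s}$ into $N$. If $V_x$ later grows, we also enumerate the new elements, while they avoid $Z$. This is legitimate because $N$ is only required to be $\Sigma^0_2$, so enumerating ``$y\in N$ iff some stage-wise condition eventually holds'' is allowed. With $0'$ we can at least check whether a given value $q(\langle x,n\rangle)$ is defined. When $\varphi_e$ is not total, $U_e$ is satisfied vacuously.

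For $L_e$ we assume $\varphi_e$ is a finite-one reduction of $C$ to $A$. The function $\psi(y)=\varphi_e(s(y'))$ for suitable $y'$ in the fibre relates $A$ to itself. More usefully, take $\theta(y)=$ the first value $\varphi_e(n)$ with $p(n)=y$ and $\varphi_e(n)>y$. This is a partial ascending self-reduction of $A$, because $A(\varphi_e(n))=C(n)=A(p(n))$. If $y\in P$, its fibre is infinite, and finite-one-ness of $\varphi_e$ forces values above $y$, so $y\in\dom\theta$. Lemma~\ref{lem:cofinite} makes $\N\setminus\dom\theta$ infinite. We choose a fresh marker $m_e\notin\dom\theta$ and keep it out of $N$, so $m_e\in P$ has an infinite fibre. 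To reach a contradiction we need that for $y\in P$ \emph{every} infinite fibre yields a value $>y$. To arrange this, $\theta$ should be defined from $\varphi_e$ restricted to a $0'$-stable approximation of the fibre. The marker is chosen with $0'$ by asking ``does $\theta(m)$ converge?'', which is $\Sigma^0_1$, and moving to a new $m$ whenever the answer is yes. Lemma~\ref{lem:cofinite} guarantees the marker eventually settles.

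The main obstacle is this interaction. $\theta$ depends on $p$, hence on $N$, which is being built, so $\dom\theta$ must be defined in a way that does not circularly depend on the construction. Upper requirements of lower priority must also respect the $L$-markers, which is exactly why Lemma~\ref{lem:avoid} carries the avoidance set $Z$. My first attempt would be to define $\theta$ using only the $H_k$-levels of $p$ that are fixed before $m_e$ is chosen. I would then verify by a finite-injury argument, relative to $0'$, that each requirement acts finitely often and that $N$ is $\Sigma^0_2$.
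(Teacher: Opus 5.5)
There is a genuine gap: as you set it up, the construction is circular, and you do not resolve this. The objects you extract from a hypothetical $h$ are the right ones. They are the $A$-homogeneous family $\{p(h(\langle x,n\rangle)):n\in\N\}$ for Lemma~\ref{lem:avoid}, and the partial ascending self-reduction $\theta$ with $P\subseteq\dom\theta$ for Lemma~\ref{lem:cofinite}. The trouble is that you index $U_e$ and $L_e$ by candidate reductions to and from $C$. The strategies must then form $q=p\circ\varphi_e$ and ask whether $\theta(m)$ converges while $N$ is still being built. But $p$ exists only after the construction, via a $\Pi^0_2$ presentation of $P=\N\setminus N$, and $\dom\theta$ depends on the entire fibres of $p$, i.e.\ on the whole future of the construction.

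You name this as the main obstacle. Your suggested fix, using only the $H_k$-levels available before $m_e$ is chosen, lets $\theta$ see only part of each fibre. Then finite-one-ness of $\varphi_e$ no longer forces $P\subseteq\dom\theta$, and the final contradiction is lost.

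The missing idea is to state the requirements with no reference to $p$ at all. Let $\mathcal U_e$ range over \emph{all} uniformly c.e.\ families $(V^e_x)_x$: if the family is $A$-homogeneous, some row is finite and contained in $N$. Let $\mathcal L_e$ range over \emph{all} partial computable $\varphi_e$: if $\varphi_e$ is a partial ascending self-reduction of $A$, some $m\notin N\cup\dom\varphi_e$ exists. Both lemmas apply to arbitrary such objects, so these requirements can be met without knowing $p$. Once $p$ is fixed, a hypothetical $h$ produces a concrete family and a concrete $f_h$. Each has an index and is defeated by the corresponding requirement.

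Your $U_e$ strategy also fails as described. If the current candidate row is infinite and avoids $Z$, you keep enumerating its new elements forever. Then $U_e$ is never met, lower-priority markers can be injured infinitely often, and you give no rule for abandoning a candidate. What you overlook is that deciding whether a given row is finite needs $0''$, but \emph{finding} a finite row does not. Search over pairs $(x,t)$ for the $\Pi^0_1$ condition $\forall r\geq t\,[V^e_{x,r}=V^e_{x,t}]$, which $0'$ decides. A passing pair certifies $V^e_x=V^e_{x,t}$, so $\mathcal U_e$ acts at most once. Once the higher-priority markers have settled, Lemma~\ref{lem:avoid} guarantees a passing pair whose row avoids them.

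Likewise, no ``$0'$-stable approximation of the fibre'' is needed for $L_e$. Finite-one-ness of $h$ alone gives $P\subseteq\dom f_h$. Since $0'$ certifies $m\notin\dom\varphi_e$ permanently, a marker can be lost only to the finitely many actions of higher-priority $\mathcal U$-requirements.
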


\begin{proof}
Fix an effective enumeration
\[
   \bigl((V_x^e)_{x\in\N}\bigr)_{e\in\N}
\]
of all uniformly c.e.\ families, and an effective enumeration
$(\varphi_e)_{e\in\N}$ of all partial computable functions.

Fix standard finite stage approximations
\[
   V^e_{x,0}\subseteq V^e_{x,1}\subseteq\cdots,
   \qquad
   V_x^e=\bigcup_s V^e_{x,s},
\]
uniformly computable in $e,x,s$.

We construct a set $N$ which is c.e.\ in $0'$, satisfying the
requirements
\[
\mathcal U_e:\quad
   (V_x^e)_x\text{ is $A$-homogeneous}
   \Longrightarrow
   \exists x\,
   [V_x^e\text{ finite and }V_x^e\subseteq N],
\]
and
\[
\mathcal L_e:\quad
   \varphi_e\text{ is a partial ascending self-reduction of }A
   \Longrightarrow
   \exists m\,
   [m\notin N\cup\dom\varphi_e].
\]
Use the priority ordering
\[
   \mathcal U_0>\mathcal L_0>
   \mathcal U_1>\mathcal L_1>\cdots.
\]
The construction does not attempt to decide whether either antecedent
in the requirements holds. Its actions are independent of $A$.
The hypotheses involving $A$ are used only in the verification:
Lemma~\ref{lem:avoid} is used to verify the
$\mathcal U_e$-requirements, while Lemma~\ref{lem:cofinite} is used
to verify the $\mathcal L_e$-requirements.

At stage $s$, the first $s$ requirements are visited in the
displayed priority order.

Each $\mathcal L_e$ may carry one marker $m_e$. Initially,
$N=\varnothing$ and no requirement carries a marker.

At every point of the construction we maintain the invariant that
all active markers are pairwise distinct and lie outside the current
content of $N$. All enumerations into $N$ and all cancellations take
effect immediately, before the next requirement is visited.

The intended roles of the requirements are as follows.
After $P=\N\setminus N$, the map $p$, and $C=p^{-1}(A)$ have been
defined, the $\mathcal U_e$ requirements will rule out finite-one
reductions from $\Cyl(A)$ to $C$, while the $\mathcal L_e$
requirements will rule out finite-one reductions from $C$ to $A$.

\medskip
\noindent
\textbf{Strategy for $\mathcal U_e$.}
Let $Z_e$ be the finite set of markers currently carried by
higher-priority $\mathcal L$-requirements. If $\mathcal U_e$ has not
yet acted, search among pairs $x,t\leq s$ for one such that
\[
   V^e_{x,t}\cap Z_e=\varnothing,
   \qquad
   V^e_{x,t}\subseteq\{0,\ldots,s\},
\]
and
\[
   \forall r\geq t\,
   [V^e_{x,r}=V^e_{x,t}].
\]
The last condition is decidable in $0'$.
Indeed, for fixed $e,x,t$, failure of the stability condition is the
$\Sigma^0_1$ statement
\[
   \exists r\geq t\,
   [V^e_{x,r}\neq V^e_{x,t}].
\]
Hence $0'$ decides whether the row receives any new element after the
specified stage $t$. Notice that this does not amount to deciding,
with $0'$, whether an arbitrary c.e.\ row is finite.

If such a pair exists,
choose the least one in a fixed computable ordering and let
\[
   F=V^e_{x,t}.
\]
By the stability condition, $F=V_x^e$.

First cancel every active marker $m_i$ carried by a lower-priority
$\mathcal L_i$ such that
\[
   m_i\in F.
\]
Then enumerate every element of $F$ into $N$. Since
\[
   F\cap Z_e=\varnothing,
\]
no higher-priority marker belongs to $F$, while every lower-priority
marker belonging to $F$ has already been cancelled. Hence, after
the enumeration, every marker which remains active lies outside the
current content of $N$. Declare $\mathcal U_e$ permanently satisfied.

If no such pair exists, do nothing.

\medskip
\noindent
\textbf{Strategy for $\mathcal L_e$.}
If $\mathcal L_e$ currently has no marker, search among $m\leq s$
for an $m$ such that
\[
   m\notin\dom\varphi_e,
   \qquad
   m\notin N_s,
\]
and $m$ is distinct from all currently active markers, where $N_s$
denotes the current finite content of $N$ at the moment when
$\mathcal L_e$ is visited. The condition
$m\notin\dom\varphi_e$ is decidable in $0'$.

If such numbers exist, choose the least one and designate it as
$m_e$; otherwise do nothing. If a higher-priority
$\mathcal U$-requirement later selects a row containing $m_e$, cancel
$m_e$ before that row is enumerated into $N$. We call this
cancellation an injury to the $\mathcal L_e$-strategy. Every
lower-priority $\mathcal U$-requirement which has not yet acted
includes $m_e$ among its restraints.
We now verify that all requirements are satisfied.

\medskip
\noindent
\textbf{Verification.}
Observe that an active marker carried by an $\mathcal L_e$-strategy
can be injured only by a higher-priority $\mathcal U$-strategy.
Indeed, a lower-priority $\mathcal U$-strategy either had already
acted when the marker was chosen, in which case its stabilized row
was already contained in $N$ and hence could not contain the marker,
or had not yet acted, in which case, as long as the marker remains
active, it includes the marker among its restraints and therefore
must choose a row disjoint from it.

By definition, once the $\mathcal U_i$-strategy acts, it declares
$\mathcal U_i$ permanently satisfied and is never eligible to act
again. Indeed, its chosen stabilized row is then entirely contained
in $N$. 

Thus every $\mathcal U_i$ acts at most once. Hence, for every $e$,
there is a stage $s_e$ after which no $\mathcal U_i$ of higher
priority than $\mathcal L_e$ acts. By the preceding observation,
$\mathcal L_e$ cannot be injured after stage $s_e$.

If $\mathcal L_e$ carries a marker at the end of stage $s_e$, that
marker is final. Otherwise, either it chooses a marker at some later
visit, in which case that marker is final, or it remains permanently
markerless. Thus every $\mathcal L_e$ eventually reaches a final
state. Consequently, for every fixed requirement, the finite set of
active markers carried by higher-priority $\mathcal L$-strategies
eventually stabilizes.

Suppose first that $\varphi_e$ is a partial ascending self-reduction.
By Lemma~\ref{lem:cofinite},
\[
   D_e=\N\setminus\dom\varphi_e
\]
is infinite. Choose a stage $s_0$ after which no higher-priority
$\mathcal U$-strategy acts and the finite set of active markers
carried by higher-priority $\mathcal L$-strategies has stabilized.

Choose
\[
   m\in D_e
\]
so large that $m>s_0$, that $m$ is larger than every final
higher-priority marker, and that $\mathcal L_e$ is among the
requirements visited at stage $m$. Consider stage $m$.

Before $\mathcal L_e$ is visited at stage $m$, we have $m\notin N$.
Indeed, an action performed at an earlier stage $r<m$ enumerates only
numbers at most $r$, and therefore cannot enumerate $m$. At stage
$m$, before $\mathcal L_e$ is visited, only higher-priority
requirements have been visited. The higher-priority
$\mathcal L$-requirements do not enumerate into $N$, and no
higher-priority $\mathcal U$-requirement acts after $s_0$. Hence no
action before the visit of $\mathcal L_e$ at stage $m$ can have placed
$m$ into $N$.

Moreover, $m$ is not an active marker when $\mathcal L_e$ is visited.
All higher-priority active markers have already reached their final
values and are smaller than $m$. Any active marker carried by a
lower-priority $\mathcal L$-strategy was chosen at some earlier stage
$r<m$, since no lower-priority requirement has yet been visited during
stage $m$; hence that marker is at most $r<m$.

Any marker already carried by $\mathcal L_e$ itself was also chosen
at an earlier stage $r<m$, and hence is smaller than $m$.

If $\mathcal L_e$ already carries a marker when it is visited at
stage $m$, let $m_e$ denote this marker. By construction,
\[
   m_e\notin\dom\varphi_e.
\]
No higher-priority $\mathcal U$-requirement acts after $s_0$.
Every lower-priority $\mathcal U$-requirement which had not yet acted
when $m_e$ was chosen includes $m_e$ among its restraints and hence,
if it acts later, selects a row disjoint from $\{m_e\}$.

On the other hand, if a lower-priority $\mathcal U$-requirement had
already acted before $m_e$ was chosen, its selected row was already
contained in $N$ at the time of the choice. Since $m_e$ was chosen
outside the current content of $N$, that row cannot contain $m_e$.
Therefore
\[
   m_e\notin N\cup\dom\varphi_e,
\]
and $\mathcal L_e$ is satisfied.

Otherwise, $\mathcal L_e$ has no marker when it is visited at stage
$m$. By the choice of $m$ and the preceding observations, $m$ itself
is outside $N$, outside $\dom\varphi_e$, and distinct from all active
markers. Thus $m$ is a candidate in the bounded search performed by
$\mathcal L_e$, and hence the strategy chooses some marker $m_e$.

No higher-priority $\mathcal U$-requirement acts thereafter, and every
lower-priority $\mathcal U$-requirement which has not yet acted treats
$m_e$ as a restraint. If a lower-priority $\mathcal U$-requirement
acted before $m_e$ was chosen, its selected row is already contained
in $N$, whereas $m_e$ was chosen outside the current content of $N$.
Hence that row cannot contain $m_e$. Therefore
\[
   m_e\notin N\cup\dom\varphi_e.
\]

If a marker belonging to a higher-priority $\mathcal L$-strategy is
later defined or redefined, its new
value is chosen outside the current content of $N$. Since every row
previously selected by a $\mathcal U$-strategy is already contained
in $N$, the new marker cannot belong to any such row. Thus a previous
$\mathcal U$-action remains compatible with every later definition or
redefinition of a higher-priority marker.

Now suppose that $(V_x^e)_x$ is $A$-homogeneous. If
$\mathcal U_e$ has already acted, it is permanently satisfied.
Otherwise, after all higher-priority marker restraints have reached
their final values, let $Z$ be the resulting finite set of markers.
At every subsequent visit of $\mathcal U_e$, the set $Z_e$ used by
the strategy is therefore equal to $Z$.

By Lemma~\ref{lem:avoid}, there is an $x$ such that
\[
   V_x^e\text{ is finite}
   \qquad\text{and}\qquad
   V_x^e\cap Z=\varnothing.
\]

Since $V_x^e$ is finite, choose a stage $t$ by which every element of
$V_x^e$ has been enumerated and after which no new element enters the
row. Thus
\[
   V^e_{x,t}=V_x^e
\]
and
\[
   V^e_{x,r}=V^e_{x,t}
   \qquad\text{for every }r\geq t.
\]

For every
sufficiently large stage $s$ at which $\mathcal U_e$ is visited, we
have
\[
   x,t\leq s,
   \qquad
   V^e_{x,t}\cap Z_e=\varnothing,
   \qquad
   V^e_{x,t}\subseteq\{0,\ldots,s\}.
\]
Thus $(x,t)$ satisfies all the tests in the $\mathcal U_e$-strategy.
Since every fixed requirement is visited at all sufficiently large
stages, the bounded $0'$-search eventually makes $\mathcal U_e$ act.
Therefore every $\mathcal U_e$ is satisfied.

At each stage only finitely many requirements are visited, each
strategy performs only finitely many $0'$-decidable tests, and only a
finite set is enumerated into $N$. Thus the enumeration of $N$ is
computable in $0'$.

Let
\[
   P=\N\setminus N.
\]

Since $N$ is c.e.\ in $0'$, we have $N\in\Sigma^0_2$.
Therefore $P=\mathbb N\setminus N$ belongs to $\Pi^0_2$.

Although $N$ was enumerated relative to $0'$, the profile-realization
lemma converts the resulting $\Pi^0_2$ set $P$ into the infinite-fibre
profile of an ordinary computable function.
By Lemma~\ref{lem:profile}, fix a total computable
surjection $p$ with computable section such that
\[
   I_p=P,
\]
and put
\[
   C=p^{-1}(A).
\]
Since
\[
   I_p=P=\N\setminus N,
\]
we have
\[
   y\in P
   \quad\Longleftrightarrow\quad
   p^{-1}(y)\text{ is infinite},
\]
and
\[
   y\in N
   \quad\Longleftrightarrow\quad
   p^{-1}(y)\text{ is finite}.
\]
This is the link between the priority requirements and the
finite-one degree of $C$: the $\mathcal U_e$ requirements exploit
finite fibres over $N$, whereas the $\mathcal L_e$ requirements
exploit infinite fibres over $P$.

Let $s$ be a computable section of $p$. For every $y\in\N$,
\[
   y\in A
   \quad\Longleftrightarrow\quad
   p(s(y))\in A
   \quad\Longleftrightarrow\quad
   s(y)\in C.
\]
Since $s$ is injective, it witnesses
\[
   A\leq_1 C,
\]
and hence
\[
   A\leq_{\fo}C.
\]
On the other hand, for every $u\in\N$,
\[
   u\in C
   \quad\Longleftrightarrow\quad
   p(u)\in A,
\]
so $p$ witnesses
\[
   C\leq_{\m}A.
\]
Since $A\leq_1C$ implies $A\leq_{\m}C$, it follows that
\[
   C\equiv_{\m}A.
\]

We first show that
\[
   C<_{\fo}\Cyl(A).
\]

There is a one-one reduction
\[
   u\longmapsto \langle p(u),u\rangle
\]
from $C$ to $\Cyl(A)$. Suppose conversely that
\[
   h:\Cyl(A)\leq_{\fo} C.
\]
For each $x$, define
\[
   V_x=\{p(h(\langle x,n\rangle)):n\in\N\}.
\]
Although $p$ was chosen only after the construction of $N$, there is
no circularity here. Once $p$ and the hypothetical reduction $h$ are
fixed, $(V_x)_{x\in\N}$ is an ordinary uniformly c.e.\ family.
Consequently, there is an index $e$ such that
\[
   V_x=V_x^e
   \qquad\text{for every }x.
\]

This family is $A$-homogeneous. Indeed, if
\[
   y=p(h(\langle x,n\rangle)),
\]
then
\[
   y\in A
   \iff h(\langle x,n\rangle)\in C
   \iff \langle x,n\rangle\in\Cyl(A)
   \iff x\in A.
\]
Hence requirement $\mathcal U_e$, for the index $e$ fixed above,
supplies an $x$ for which $V_x=V_x^e$ is finite and
\[
   V_x\subseteq N=\N\setminus P.
\]

For every $y\in V_x$, the fibre $p^{-1}(y)$ is finite. Therefore
\[
   h(\{\langle x,n\rangle:n\in\N\})
   \subseteq
   \bigcup_{y\in V_x}p^{-1}(y),
\]
a finite set.
This is impossible, since a finite-one function cannot map an
infinite set into a finite set.

Thus
\[
   C<_{\fo}\Cyl(A).
\]

We have already shown that
\[
   A\leq_{\fo}C.
\]
It remains to prove that
\[
   C\not\leq_{\fo}A.
\]
Suppose, toward a contradiction, that
\[
   h:C\leq_{\fo}A.
\]

Define the partial computable function $f_h$ by
\[
   f_h(y)=h(u),
\]

where $u$ is the least number such that
\[
   p(u)=y
   \qquad\text{and}\qquad
   h(u)>y.
\]

Whenever $f_h(y)$ is defined,
\[
   f_h(y)>y
\]
and, because $h$ is a reduction,
\[
   A(f_h(y))
   =A(h(u))
   =A(p(u))
   =A(y).
\]
Thus $f_h$ is a partial ascending self-reduction of $A$.

If $y\in P$, then $p^{-1}(y)$ is infinite. Since $h$ is finite-one,
\[
   h^{-1}(\{0,\ldots,y\})
   =
   \bigcup_{z\leq y}h^{-1}(z)
\]
is finite. Hence some $u\in p^{-1}(y)$ satisfies $h(u)>y$, and so
\[
   P\subseteq\dom f_h.
\]
Again, there is no circularity in applying one of the requirements
fixed before the construction. Once $p$ and the hypothetical
reduction $h$ are fixed, $f_h$ is an ordinary partial computable
function. Hence there is an index $e$ such that
\[
   \varphi_e=f_h.
\]
By the verification of $\mathcal L_e$, there is a final marker
$m_e$ such that
\[
   m_e\notin N
   \qquad\text{and}\qquad
   m_e\notin\dom f_h.
\]

But $m_e\notin N$ means $m_e\in P$, contradicting
$P\subseteq\dom f_h$.

Hence
\[
   C\not\leq_{\fo}A.
\]
Since $A\leq_{\fo}C$, it follows that
\[
   A<_{\fo}C.
\]
This proves the theorem.
\end{proof}

\begin{remark}

Several points concerning the effectiveness of the construction are
worth emphasizing.

First, for a partial computable function $\varphi_e$, the condition
\[
   m\notin\dom\varphi_e
\]
is decidable in $0'$. Thus an $\mathcal L_e$-strategy can choose a
marker with permanent certification that it lies outside
$\dom\varphi_e$.

Second, a $\mathcal U$-requirement acting at stage $s$ enumerates into
$N$ only numbers at most $s$. Consequently, no action before stage
$m$ can enumerate $m$ into $N$. Once the higher-priority
$\mathcal U$-requirements have ceased acting, a newly chosen marker
is also protected from every lower-priority $\mathcal U$-requirement.
One which has already acted selected a row already contained in $N$;
since the marker is chosen outside the current content of $N$, that
row cannot contain it. One which has not yet acted includes the
marker among its restraints and therefore selects a row disjoint
from it.

Finally, the finite parameters hardcoded in the proofs of
Lemmas~\ref{lem:avoid} and~\ref{lem:cofinite} need not be recovered
by the priority construction. For a $\mathcal U_e$-requirement, the
current finite restraint set $Z_e$ is explicitly available at every
stage. Once it stabilizes, Lemma~\ref{lem:avoid} guarantees the
existence of a finite row disjoint from it, and the bounded
$0'$-search eventually finds a stabilization pair $(x,t)$ without
having to recognize that the restraints have stabilized. For an
$\mathcal L_e$-requirement, Lemma~\ref{lem:cofinite} is used only to
guarantee arbitrarily large elements of
\[
   \N\setminus\dom\varphi_e,
\]
while each individual candidate is checked by the $0'$-test described
above. Thus the two lemmas are used only existentially in the
verification, and no uniform computation of the finite parameters
appearing in their proofs is required.

\end{remark}

\section{The dichotomy}

We now combine the local interpolation theorem with Maslova's
dichotomy for nonrecursive c.e.\ many-one degrees to obtain the
global one-or-infinity result.

We use the following consequence of Maslova's
theorem~\cite{Maslova1979}, in the formulation recorded by Richter,
Stephan, and Zhang~\cite[p.~17]{RSZ}: every nonrecursive c.e.\ many-one
degree contains either exactly one or infinitely many finite-one
degrees.

\begin{corollary}\label{cor:oq2}
Every nonrecursive many-one degree contains either exactly one or
infinitely many finite-one degrees. In particular, there is no
nonrecursive many-one degree containing at least two but only finitely
many finite-one degrees.
\end{corollary}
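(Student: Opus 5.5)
The argument is by contradiction, following the outline in the introduction. Fix a nonrecursive many-one degree $\mathbf d$, and suppose it contains only finitely many finite-one degrees but at least two. By the preliminaries, $T=[\Cyl(A_0)]_{\fo}$ (for any $A_0\in\mathbf d$) is the greatest finite-one degree inside $\mathbf d$. The finite-one degrees inside $\mathbf d$ form a finite partial order with top element $T$, and there is at least one element other than $T$. Hence the finite set $\{E: E<T\}$ is nonempty, and we can pick a maximal element $E$ of it. Then $T$ covers $E$: there is no finite-one degree $D$ with $E<D<T$. Any such $D$ would be many-one equivalent to $A_0$, since finite-one reductions are many-one reductions, so it would lie in $\mathbf d$ and contradict maximality.

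Choose $A\in E$. Since $E\neq T$, we have $A\not\equiv_{\fo}\Cyl(A)$, and so $A\not\equiv_1\Cyl(A)$; thus $A$ is not a cylinder. Also $A$ is nonrecursive, because $\mathbf d$ is. We now split into cases according to the enumerability of $A$.

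\emph{Case 1: $A$ is c.e.} Then $\mathbf d$ is a nonrecursive c.e.\ many-one degree. By Maslova's theorem it contains exactly one or infinitely many finite-one degrees. Either alternative contradicts our assumption.

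\emph{Case 2: $\overline A$ is c.e.} Complementation $X\mapsto\overline X$ preserves $\leq_{\m}$ and $\leq_{\fo}$, since the same reduction function works for the complements. It therefore induces an order isomorphism from the finite-one degrees inside $\mathbf d$ onto those inside the degree $\overline{\mathbf d}=[\overline A]_{\m}$. That degree is c.e.\ and nonrecursive, and it contains the same finite number (at least two) of finite-one degrees. This again contradicts Maslova's theorem.

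\emph{Case 3: neither $A$ nor $\overline A$ is c.e.} The hypotheses of Theorem~\ref{thm:local} hold, so there is $C\equiv_{\m}A$ with $E<[C]_{\fo}<[\Cyl(A)]_{\fo}=T$. This contradicts the fact that $T$ covers $E$.

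In all cases we reach a contradiction, so a nonrecursive many-one degree with more than one finite-one degree has infinitely many. The ``in particular'' clause is a restatement of this. The main point needing care is the bookkeeping that the degree $C$ produced by Theorem~\ref{thm:local} really lies in $\mathbf d$, which holds since $C\equiv_{\m}A$. One should also check that complementation commutes with $\Cyl$ up to the relevant equivalence, but we only need the order isomorphism itself.
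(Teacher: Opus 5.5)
Your proof is correct and matches the paper's argument step for step. You choose a finite-one degree $E$ covered by the top degree $T=[\Cyl(A)]_{\fo}$ and observe that a representative $A$ is nonrecursive and not a cylinder; you then handle the c.e.\ case by Maslova's theorem, the co-c.e.\ case by the complementation order-isomorphism, and the remaining case by Theorem~\ref{thm:local}. Your explicit check that any intermediate $D$ lies in $\mathbf d$ fills in a detail the paper leaves implicit, and the closing remark about $\Cyl$ and complementation is unnecessary, as you note.
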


\begin{proof}
For a many-one degree $\mathbf d$, let
\[
   \mathcal F(\mathbf d)
   =
   \{[X]_{\fo}:X\in\mathbf d\}.
\]
Suppose, toward a contradiction, that $\mathbf d$ is nonrecursive and
that
\[
   1<|\mathcal F(\mathbf d)|<\aleph_0.
\]

Let $T$ be the greatest element of $\mathcal F(\mathbf d)$, that is
\[
T = [\Cyl(X)]_{\fo}
\]
for any $X\in \mathbf d$.
Since $\mathcal F(\mathbf d)\setminus\{T\}$ is finite and nonempty,
there is an element $E<T$ which is covered by $T$; that is, there is
no $D\in\mathcal F(\mathbf d)$ such that
\[
   E<D<T.
\]

Choose a set $A\in\mathbf d$ such that
\[
   [A]_{\fo}=E.
\]

Since $A\in\mathbf d$ and $\mathbf d$ is nonrecursive, 
$A$ is nonrecursive. Moreover, $A$ is not a cylinder.
Indeed, if $A$ were a cylinder, then
\[
   E=[A]_{\fo}=[\Cyl(A)]_{\fo}=T,
\]
because $[\Cyl(A)]_{\fo}$ is the greatest finite-one degree inside
$[A]_{\m}=\mathbf d$. This contradicts $E<T$.

If $A$ is c.e., then $\mathbf d=[A]_{\m}$ is a nonrecursive c.e.\
many-one degree containing finitely many but more than one finite-one
degrees, contrary to Maslova's theorem.

Suppose next that $\overline A$ is c.e. Put
\[
   \mathbf d^{\,c}=[\overline A]_{\m}.
\]
Complementation induces an order isomorphism
\[
   \Phi:\mathcal F(\mathbf d)\longrightarrow
   \mathcal F(\mathbf d^{\,c})
\]
given by
\[
   \Phi([X]_{\fo})=[\overline X]_{\fo}.
\]
Indeed, for every total computable function $f$,
\[
   f:X\leq_{\m}Y
   \quad\Longleftrightarrow\quad
   f:\overline X\leq_{\m}\overline Y,
\]
and, since the fibres of $f$ are unchanged,
\[
   f:X\leq_{\fo}Y
   \quad\Longleftrightarrow\quad
   f:\overline X\leq_{\fo}\overline Y.
\]

Thus $\Phi$ is well defined and order preserving. The same
complementation map, now considered from
$\mathcal F(\mathbf d^{\,c})$ to $\mathcal F(\mathbf d)$, is the
inverse of $\Phi$. Hence $\Phi$ is an order isomorphism.

Consequently,
\[
   |\mathcal F(\mathbf d^{\,c})|
   =
   |\mathcal F(\mathbf d)|,
\]
so $\mathbf d^{\,c}$ also contains finitely many but more than one
finite-one degrees. Since $\overline A$ is c.e.\ and nonrecursive,
this again contradicts Maslova's theorem.

We may therefore assume that neither $A$ nor $\overline A$ is c.e.
Theorem~\ref{thm:local} now yields a set $C\equiv_{\m} A$ such that
\[
   [A]_{\fo}<[C]_{\fo}<[\Cyl(A)]_{\fo}.
\]
Since $C\equiv_{\m}A$, the degree $[C]_{\fo}$ belongs to
$\mathcal F(\mathbf d)$. Moreover,
\[
   [A]_{\fo}=E
   \qquad\text{and}\qquad
   [\Cyl(A)]_{\fo}=T.
\]
Hence
\[
   E<[C]_{\fo}<T,
\]
contradicting the fact that $E$ is covered by $T$.

Therefore $\mathcal F(\mathbf d)$ cannot be finite of cardinality
greater than one. Since every many-one degree contains at least one
finite-one degree, it follows that every nonrecursive many-one degree
contains either exactly one or infinitely many finite-one degrees.
\end{proof}

\section{Conclusion}

We have shown that every nonrecursive many-one degree contains either
exactly one or infinitely many finite-one degrees, giving a negative
answer to Open Question 2 of Richter--Stephan--Zhang.

The main ingredient is the local interpolation theorem: whenever $A$
is nonrecursive, is not a cylinder, and neither $A$ nor $\overline A$
is c.e., the finite-one degree of $A$ can be strictly interpolated
below the greatest finite-one degree in $[A]_{\m}$. Together with
Maslova's theorem for c.e.\ many-one degrees and complementation, this
local phenomenon rules out every finite nontrivial configuration of
finite-one degrees inside a nonrecursive many-one degree.

The local interpolation theorem also raises finer questions about the
internal order structure of finite-one degrees inside a fixed
many-one degree. In particular, it is natural to ask whether stronger
interpolation properties hold between arbitrary comparable
finite-one degrees, whether the corresponding intervals can be dense,
and whether the nonrecursive many-one degrees containing exactly one
finite-one degree admit a structural characterization.

\section*{Acknowledgements}

The main result of this paper was discovered by ChatGPT 5.6 Sol (OpenAI).

The author has reworked and verified all arguments and bears sole responsibility for the correctness of the results.

\end{document}